\documentclass[amssymb,twoside,12pt,amscd,reqno]{amsart}

\usepackage{latexsym,amsfonts}
\usepackage{amsmath,amsthm}          
\usepackage{euscript}                
\usepackage{epsfig}
\usepackage[all]{xy}

\theoremstyle{plain} \numberwithin{equation}{section}
\newtheorem{thm}{Theorem}[section]
\newtheorem{cor}[thm]{Corollary}

\newtheorem{conj}[thm]{Conjecture}
\newtheorem{lemma}[thm]{Lemma}

\theoremstyle{definition}
\newtheorem{remark}{Remark}[section]

\newcommand{\bi}{\begin{itemize}}
\newcommand{\ei}{\end{itemize}}
\newcommand{\bp}{\begin{proof}}
\newcommand{\ep}{\end{proof}}

\def\Tors{\text{Tors }}

\def\CC{\mathbb{C}}

\def\RR{\mathbb{R}}

\def\ZZ{\mathbb{Z}}

\setcounter{tocdepth}{1}

\begin{document}

\title{On compact complex surfaces of K\"ahler rank one}

\author[Chiose]{Ionu\c{t} Chiose$^\ast$}
\thanks{$\ast$ Supported by a Marie Curie International Reintegration
Grant within the $7^{\rm th}$ European Community Framework Programme
and the CNCSIS grant ID$\_$1185}
\author[Toma]{Matei Toma}
\thanks{}
\date{11/10/10}

\maketitle
\begin{abstract}
The K\"ahler rank of compact complex surfaces was introduced by Harvey and
Lawson in their 1983 paper on K\"ahler manifolds as a measure of ``k\"alerianity''. Here we give a partial classification of compact complex surfaces of K\"ahler rank 1. These are either elliptic surfaces, or Hopf surfaces, or they admit a holomorphic foliation of a very
 special type. As a consequence we give an affirmative answer to the question raised by Harvey and
Lawson whether the K\"ahler rank is a birational invariant.
\end{abstract}
\section*{Introduction}

In \cite{blaine}, Harvey and Lawson introduced the K\"ahler rank of a compact complex surface, a quantity intended to
measure how far a surface is from being K\"ahler. A surface has K\"ahler rank $2$ iff it is K\"ahler. 
It has K\"ahler rank $1$ iff it is not K\"ahler but still admits a closed (semi-) positive $(1,1)$-form whose zero-locus is contained in a curve. In the remaining cases, it has K\"ahler rank $0$.

Harvey and Lawson computed the K\"ahler rank of elliptic surfaces, Hopf surfaces and Inoue surfaces and conjectured that the K\"ahler rank is a birational invariant. 
In this paper we prove this conjecture. One possible approach to it is local by studying some plurisubharmonic functions on the blow-up, and showing that they are pull-backs of smooth functions. However, this approach leads to a rather involved system of differential equations, and the computations are not trivial.

Instead, we use a global approach. 
 Namely,
we will study the geometry of compact complex surfaces of K\"ahler rank $1$. The existence of a smooth positive $(1,1)$ form imposes strong restrictions on the surface. In the process we also obtain a (partial) classification of non-elliptic surfaces of K\"ahler rank $1$: they are either birational to a certain class of Hopf surfaces, or else they support a very special holomorphic foliation (in which case we conjecture that they are birational to Inoue surfaces). Recall that the K\"ahler rank of elliptic surfaces is always $1$,  \cite{blaine}.

One important technical tool that we use is the compactification of hyper-concave ends proved by Marinescu and Dinh in \cite{marinesc}.

\section{Preliminary facts}

Let $X$ be a non-K\"ahler compact complex surface, 
$P^{\infty}_{bdy}$ the set of exact positive $(1,1)$-forms on $X$ and 
$$B(X)=\{x\in X: \ \ \exists \omega\in P^{\infty}_{bdy} \ {\rm with} \ \omega(x)\neq 0\}.$$ 
Then the K\"ahler rank of $X$ is defined to be $1$ iff the complement of $B(X)$ is contained in a complex curve.

Recall that a closed positive $(1,1)$-form $\omega$ on a non-K\"ahler surface is automatically exact, cf. \cite{bpv}. Such a form is also of rank $1$, i. e., $\omega\wedge\omega=0$, hence it defines a foliation by complex curves on the set where it does not vanish. It was remarked in \cite{blaine} that the foliations induced by two such forms are the same on the set where they are both defined. One thus gets a {\it canonical foliation} on $B(X)$. In fact this foliation is induced by some closed positive $(1,1)$-form on $X$:

\begin{remark}
 There exists an exact positive $(1,1)$-form $\omega$ on $X$ such that $B(X)=\{x\in X: \ \   \omega(x)\neq 0\}$. 
\end{remark}

Indeed, we may find a sequence $(\omega_n)_n$ of exact positive $(1,1)$-forms such that for each $x\in X$ there exists some $n$ with $\omega_n(x)\neq 0$. After rescaling we get $\| \omega_n\|_{C^n}\le 2^{-n}$ and $\omega:= \sum_n\omega_n$ is the desired form.

Since all elliptic non-K\"ahler surfaces are of K\"ahler rank $1$, we shall restrict our attention to non-elliptic non-K\"ahler surfaces. By Kodaira's classification the only non-elliptic non-K\"ahler surfaces are those of class $VII$, i.e. with $b_1=1$ and
with no meromorphic functions.
The $GSS$ conjecture predicts that the minimal model of a class $VII$ surface must fall into one of the following subclasses: Hopf surfaces, Inoue surfaces and Kato surfaces. Whereas all Inoue surfaces have K\"ahler rank $1$ and all Kato surfaces have K\"ahler rank $0$, Hopf surfaces may be of K\"ahler rank $0$ or $1$ according to their type, cf. \cite{blaine}, \cite{toma}. 

\begin{remark}
 The birational invariance of the K\"ahler rank holds for the above subclasses.
\end{remark}
 It holds indeed for those surfaces whose minimal model has  K\"ahler rank $1$. 
It also holds for Kato surfaces, since these contain a cycle of rational curves and it is was shown in  \cite{toma} that in this case the K\"ahler rank is $0$. 
If $X$ is a surface whose minimal model is a Hopf surface $Y$ of K\"ahler rank $0$, then $Y$ admits an unramified finite covering $Y'$ which is a primary Hopf surface of K\"ahler rank $0$ too. It was shown in \cite{blaine} that $Y'$ admits only one exact positive current up to a multiplicative constant and this is the current of integration along its elliptic curve. But if $X$ admitted a closed positive $(1,1)$-form, its push-forward from $X$ to $Y$, and then the pull-back to $Y'$ would give a positive, non-zero, exact current which is not a multiple of an  integration current on $Y'$ (its singular locus is a finite union of points). So the  K\"ahler rank of $X$ has to vanish as well.

Let now $X$ be a non-elliptic surface of class $VII$ and
suppose the K\"ahler rank of $X$ to be $1$.

Let $\omega$ be a closed, non-zero positive $(1,1)$ form on $X$.  
Since $\omega$ is exact and $H^{1,1}_{\RR}(X)$ is naturally contained in $H^2(X,\RR)$, cf. \cite{bpv}, we can choose
a $\bar\partial$-closed $(0,1)$-form $\gamma^{0,1}$ such that 
$$\omega=\partial\gamma^{0,1}.$$
Denote by $\gamma^{1,0}$ the complex conjugate of $\gamma^{0,1}$.
Since $\omega$ is of rank $1$, it follows that $$i\gamma^{1,0}\wedge\gamma^{0,1}$$
is a positive, $i\partial\bar\partial$-closed $(1,1)$ form. The following integral is thus non-negative
$$I:=\int_Xi\gamma^{1,0}\wedge\gamma^{0,1}\wedge\omega.$$ 
We distinguish two cases, $I>0$ and $I=0$.

\section{The case $I>0$} 
\begin{thm}
Let $X$ be a class $VII$ surface and $\omega=\partial\gamma^{0,1}$ a closed, positive $(1,1)$ form
on $X$ such that $$\int_Xi\gamma^{1,0}\wedge\gamma^{0,1}\wedge\omega>0.$$ Then $X$ is birational to a Hopf surface of K\"ahler rank $1$.
\end{thm}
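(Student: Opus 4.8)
The hypothesis gives a $\bar\partial$-closed $(0,1)$-form $\gamma^{0,1}$ with $\omega = \partial\gamma^{0,1}$ closed positive, and $I = \int_X i\,\gamma^{1,0}\wedge\gamma^{0,1}\wedge\omega > 0$. I want to extract from positivity of $I$ a global structure: the key point should be that $i\gamma^{1,0}\wedge\gamma^{0,1}$ is a genuine (nonzero) $i\partial\bar\partial$-closed positive $(1,1)$-form, and integrating it against $\omega$ being positive says the two "directions" $\gamma^{1,0}$ and $\omega$ are somewhere transverse. Class VII surfaces have $b_1 = 1$, so $H^1(X,\mathbb R)$ is one-dimensional; hence the real closed $1$-form $\operatorname{Re}\gamma$ (or rather the imaginary/real parts of $\gamma^{1,0}+\gamma^{0,1}$ and $i(\gamma^{1,0}-\gamma^{0,1})$) are, up to exact forms, multiples of a single harmonic generator. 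I would exploit this to pin down $\gamma$ up to $\partial$-exact terms and reduce the analysis to a rigid local model.

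Let me describe what I expect the main thread to be. Since $H^1(X,\CC) \cong \CC$ for class $VII$, pick a closed $1$-form representing a generator; after adjusting $\gamma^{0,1}$ by $\bar\partial$ of a function (which does not change $\omega = \partial\gamma^{0,1}$) I would arrange that $\gamma^{0,1}$ is, up to controlled error, the $(0,1)$-part of a fixed closed $1$-form $\alpha$ on $X$. The condition $I > 0$ then becomes a statement about $\alpha$: that $\int_X \alpha \wedge \bar\alpha \wedge \omega \ne 0$ in an appropriate sense, forcing $\omega$ and $\alpha$ to be pointwise independent on an open set, so the foliation defined by $\omega$ (equivalently the canonical foliation on $B(X)$) is transverse to the foliation/fibers of the map defined by the closed $1$-form $\alpha$. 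Concretely, I'd integrate $\alpha$ over paths to get a (multivalued, or better: defined on a $\ZZ$-cover) holomorphic-type function, use its real part as an exhaustion on an appropriate end, and invoke the Dinh--Marinescu compactification of hyperconcave ends (flagged in the introduction) to produce a curve or an embedding that forces $X$ to carry nonconstant "meromorphic data" on a finite cover — which for class $VII$ must come from an elliptic structure or (excluding elliptic) from the Hopf situation. The Hopf surfaces of Kähler rank $1$ are precisely those that carry such a closed positive $(1,1)$-form with nonvanishing $I$, matching the numerical positivity here.

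**Key steps, in order.** (1) Use $b_1(X)=1$ to reduce $\gamma^{0,1}$ modulo $\bar\partial$-exact terms to the $(0,1)$-part of a fixed closed $1$-form; record that this does not affect $\omega$. (2) Translate $I>0$ into transversality of $\omega$ (hence of the canonical foliation on $B(X)$) and the level sets of the function obtained by integrating $\alpha$ on the universal/abelian cover. (3) On this cover, build a plurisubharmonic (indeed hyperconcave) exhaustion of an end from $\operatorname{Re}\int\alpha$ together with a potential for $\omega$, checking the Levi/concavity estimates using $\omega\wedge\omega=0$ and the positivity of $i\gamma^{1,0}\wedge\gamma^{0,1}$. (4) Apply the Dinh--Marinescu compactification \cite{marinesc} to compactify that end to a normal Stein space, obtaining either a nonconstant meromorphic/holomorphic object on $X$ or on a finite cover. (5) Since $X$ is class $VII$ (no nonconstant meromorphic functions) and non-elliptic by assumption, the only way to reconcile the extra holomorphic data is that $X$ is covered by, hence birational to, a Hopf surface; and the $I>0$ condition pins down the Kähler rank $1$ type by comparison with Harvey--Lawson's computation \cite{blaine}.

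**Main obstacle.** The hard part will be step (3)--(4): producing a genuinely hyperconcave (not merely pseudoconcave) end on the right cover and verifying the precise curvature inequalities needed to feed into \cite{marinesc}, since $\omega$ is only semipositive with zeros on a curve and $i\gamma^{1,0}\wedge\gamma^{0,1}$ degenerates exactly along the foliation — so the two degeneracy loci must be shown to be complementary, which is exactly where $I>0$ (rather than $I=0$) is used in an essential, quantitative way. A secondary difficulty is bookkeeping the passage to finite/abelian covers so that the compactified object descends to give a \emph{birational} (not merely dominant-from-a-cover) identification of $X$ with a Hopf surface; one must check the covering group acts compatibly with the compactification, which should follow from uniqueness in the Dinh--Marinescu construction but needs care.
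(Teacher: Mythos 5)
Your scaffolding (pass to a $\mathbb{Z}$-cover, build a plurisubharmonic exhaustion, invoke the Dinh--Marinescu compactification \cite{marinesc}) does point in the general direction of the paper's argument, but the two steps you yourself flag as the crux are exactly the ones left without a method, and they do not follow from what you wrote. The decisive issue is how the purely integral hypothesis $I>0$ is turned into pointwise positivity: the form $\eta=\omega+i\gamma^{1,0}\wedge\gamma^{0,1}$ is only semipositive and degenerates along the foliation, and $I>0$ gives nothing more than $\int_X\eta\wedge\eta>0$. The paper's key move, absent from your proposal, is to combine this with $\int_X\eta\wedge g>0$ for a Gauduchon metric $g$ and apply Buchdahl's Nakai--Moishezon criterion \cite{buchdahl}, which produces an effective divisor $D$ and a smooth $\psi$ with $\eta+i\partial\bar\partial\psi-if_D>0$; only this output makes a function of the type $(1+a^2\psi)e^{a\varphi}$ strictly plurisubharmonic away from the (contracted) curves and yields a genuine hyperconcave end. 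Your proposed substitute, ``checking the Levi/concavity estimates using $\omega\wedge\omega=0$ and the positivity of $i\gamma^{1,0}\wedge\gamma^{0,1}$,'' cannot work, precisely because both forms degenerate and $I>0$ is not a pointwise statement. Your step (1) is also incorrect as stated: replacing $\gamma^{0,1}$ by $\gamma^{0,1}+\bar\partial f$ changes $\omega$ by $\partial\bar\partial f$, and if $\gamma^{0,1}$ were literally the $(0,1)$-part of a real closed $1$-form then $\omega$ would vanish; what is actually used is the additively automorphic potential $\varphi$ on the $\mathbb{Z}$-cover with $p^*\omega=i\partial\bar\partial\varphi$, taken from \cite{toma}.

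The endgame is also missing. Your step (5), ``the only way to reconcile the extra holomorphic data is that $X$ is covered by, hence birational to, a Hopf surface,'' is not an argument, and class $VII$ surfaces admit no nonconstant meromorphic functions to be produced in the first place. The paper instead argues by contradiction: assuming $X$ has no elliptic curve (and no cycle of rational curves, else the K\"ahler rank would be $0$), it contracts the finitely many rational curves, compactifies the hyperconcave end of the contracted cover to a Stein space, uses the Ornea--Verbitsky argument \cite{ornea} to see that exactly one point is added, extends the generating deck transformation to a contraction, embeds the situation into a Hopf manifold via Kato \cite{kato1}, and then invokes Kato's result \cite{kato2} that the surface would contain an elliptic curve --- a contradiction. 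Only then does Nakamura's theorem \cite{nakamura} (elliptic curve present, no cycle of rational curves) identify the minimal model as a Hopf surface, whose K\"ahler rank is $1$ by \cite{blaine} and the birational-invariance discussion for Hopf surfaces of rank $0$. None of this chain is recoverable from ``uniqueness in the Dinh--Marinescu construction,'' so as it stands the proposal has genuine gaps at both the analytic input and the geometric conclusion.
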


\begin{proof}
Let $Y$ be the minimal model of $X$. If $X$ contains a cycle of rational curves, then the K\"ahler rank of $X$ is $0$ (cf. \cite{toma}).

We will show that $X$ contains an elliptic curve. If this is the case, since it contains no cycle of rational curves, $Y$ will have to be 
 a Hopf surface (cf. \cite{nakamura}).

We assume to the contrary that $X$ has no elliptic curves and no cycles of rational curves. It is known that $X$ can carry only a finite number of complex curves, all rational. Let $(C_i)_i$ be the connected components of the maximal divisor $C$ on $X$. Consider $p:\tilde X\to X$ a regular covering of $X$ with deck transformation group isomorphic to ${\mathbb Z}$. Since there is no cycle of rational curves on $X$, each connected component of $p^{-1}(C_i)$ is isomorphic to $C_i$ via $p$. Let $\pi:X\to X'$ be the normal surface obtained from $X$ by contracting each $C_i$ to a point, and $\pi':\tilde X\to\tilde X'$ the surface obtained by contracting each connected component of $p^{-1}(C_i)$ to a point.
We then have a covering map $p':\tilde X'\to X'$ such that $p'\circ\pi'=\pi\circ p$.

Let $\omega=\partial\gamma^{0,1}$ be a positive $(1,1)$ form on $X$ such that $I=\int_Xi\gamma^{1,0}\wedge\gamma^{0,1}\wedge\omega>0$. Set $\eta=\omega+i\gamma^{1,0}\wedge\gamma^{0,1}$. 
Then $\eta$ is $i\partial\bar\partial$-closed, $\int_X\eta\wedge\eta=2\int_Xi\gamma^{1,0}\wedge\gamma^{0,1}\wedge\omega>0$ and
$\int_X\eta\wedge g>0$ where $g$ is the K\"ahler form of some  Gauduchon metric on $X$. By Buchdahl's Nakai-Moishezon criterion 
(see the Remark at the end of the paper \cite{buchdahl}), there exists a real effective divisor $D$ on $X$ and
a real ${\mathcal C}^{\infty}$ function $\psi$ on $X$ such that  
\begin{equation}\label{buchdahl}
\eta+i\partial\bar\partial\psi-if_D>0
\end{equation}
where $if_D$ is the curvature of the line bundle induced by $D$.

It is known (cf.  \cite{toma}) that there exists $\varphi$ a ${\mathcal C}^{\infty}$ function on $\tilde X$ and a representation $\rho:\Gamma\to({\mathbb R}, +)$ of the deck transformation group $\Gamma\simeq{\mathbb Z}$ of $p$
such that $p^*\omega=i\partial\bar\partial\varphi$ and $g^*\varphi=\varphi+\rho(g)$, for all $g\in\Gamma$. Then we may take $\gamma^{0,1}=i\bar\partial\varphi$.

Now let $U\subset V$ be open neighborhoods of $C$ (the maximal divisor
on X) such that $p^{-1}(V) =\bigcup_nV_n$ is a disjoint union of copies of $V$ and
$U$ is relatively compact in $V$. We can choose the Hermitian metric on
${\mathcal O}_X(D)$ such that $Supp(if_D)\subset U$. We can also assume that $\psi  > 0$ on $X$.

It is easy to check that the function  $(1+a^2\psi)e^{a\varphi}$ is strictly plurisubharmonic on $p^{-1}(X\setminus U)$ for $0<a<<1$. 

If $s$ is a section in ${\mathcal O}_X(D)$ whose zero set is $D$, the Lelong-Poincar\'e equation reads
\begin{equation*}
i\partial\bar\partial\ln\vert\vert s\vert\vert^2=[D]-if_D
\end{equation*}
Denote by $\ln\vert\vert s_n\vert\vert^2$ the function on $\tilde X$ which is $\ln\vert\vert s\vert\vert^2\circ p$ on $V_n$ and $0$ on $\tilde X\setminus V_n$.

Consider $\Phi$ a ${\mathcal C}^{\infty}$ function on $\tilde X\setminus p^{-1}(C)$ of the form
\begin{equation*}
\Phi=\varphi+(1+a^2\psi)e^{a\varphi}+\sum_na_n\ln\vert\vert s_n\vert\vert^2
\end{equation*}
where $a_n>0$. Since the function $(1+a^2\psi)e^{a\phi}$ is strictly plurisubharmonic on $p^{-1}(X\setminus U)$ and multiplicative automorphic on $\tilde X$, i.e., 
\begin{equation*}
g^*[(1+a^2\psi)e^{a\varphi}]=e^{a\rho(g)}[(1+a^2\psi)e^{a\varphi}]
\end{equation*}
for $g\in\Gamma$, it follows that we can choose $a_n$ such that $\Phi$ is a strictly plurisubharmonic function on $\tilde X\setminus p^{-1}(C)$. 

Let 
$$\Phi_-=\varphi+(1+a^2\psi)e^{a\varphi}+\sum_{n<0}a_n\ln\vert\vert s_n\vert\vert^2,$$
where we take the $V_n$ with $n<0$ in the region of $\tilde X$ where $\varphi<0$.
We denote by $\Phi'_-$ the function on $\tilde X'\setminus \pi'(p^{-1}(C))$ induced by $\Phi_-$ and with respect to this function $\tilde X'\setminus \pi'(p^{-1}(C))\cap \{\Phi'_-<0\}$ is a hyper-concave end. Then the main result of \cite{marinesc} implies that the hyper-concave end of $\tilde X'\setminus \pi'(p^{-1}(C))\cap\{\Phi'_-<0\}$ can be compactified. In particular $\tilde X'$ has only finitely many singularities, and in our case the singular set of $\tilde X'$ has to be empty due to the transitivity of the action of $\Gamma$. Therefore $X'$ is a smooth surface with no compact curves.

Since $\varphi$ is plurisubharmonic, it is constant on the connected components of $p^{-1}(C)$, therefore it descends to a continuous function $\varphi'$ on $\tilde X'$ which satisfies $g^*\varphi'=\varphi'+\rho(g),\forall g\in \Gamma$. Around each point of $\pi'(p^{-1}(C))$ consider $B(1)$ a ball of radius $1$ and $\Phi'_{\varepsilon}$ a regularization of $\Phi'$, where $\Phi'$ is the function on $\tilde X'\setminus \pi'(p^{-1}(C))$ induced by $\Phi$. Then $\Phi'_{\varepsilon}$ is strictly plurisubharmonic on $B(1-\varepsilon )$. Take $f$ a ${\mathcal C}^{\infty}$ function supported on $B(\frac 12)$, equal to $1$ on $B(\frac 14)$. If we replace $\Phi'$ by $f\Phi'_{\varepsilon}+(1-f)\Phi'$ on $B(1)$ for $0<\varepsilon <<1$, then we obtain a ${\mathcal C}^{\infty}$ strictly plurisubharmonic function on $\tilde X'$, which becomes a  
hyper-concave end with the hyper-concave end given by $\{\varphi'=-\infty\}$. Let $Y'$ be the compactification of the hyper-concave end of $\tilde X'$. It is a Stein complex space with finitely many isolated singularities. 

If we apply the above smoothing procedure to $\Phi-\varphi$ instead of $\Phi$ we obtain a multiplicative automorphic strictly plurisubharmonic function on $\tilde X'$. This is a potential of a locally conformal K\"ahler metric on $X'$. We are in position to apply the argument of \cite{ornea}, Theorem 3.1 which shows that $Y'$ is obtained from $\tilde X'$ by adding only one point. 
Moreover, if $g_0$ is the generator of $\Gamma\simeq ({\mathbb Z},+)$ such that $\rho(g_0)<0$, then $g_0$ can be extended to the whole $Y'$, and it becomes a contraction. From \cite{kato1}, Lemma 13., it follows that $Y'$ can be embedded into some affine space ${\mathbb C}^m$ such that $g_0$ is the restriction of a contracting automorphism $\tilde g_0$ of ${\mathbb C}^m$. Therefore $X'$ can be embedded into the compact Hopf manifold ${\mathbb C}^m\setminus\{0\}/<\tilde g_0>$. From \cite{kato2}, Proposition 3., we know that $X'$ contains an elliptic curve, contradiction.

Therefore the minimal model of $X$ is a Hopf surface of K\"ahler rank $1$.
\end{proof}

\section{The case $I=0$}

\begin{thm} \label{i=0}
Let $X$ be a class $VII$ surface of K\"ahler rank $1$ such that for any closed, positive $(1,1)$ form $\omega=\partial\gamma^{0,1}$ 
on $X$ one has
 $$\int_Xi\gamma^{1,0}\wedge\gamma^{0,1}\wedge\omega=0.$$ 
Then  there exists a positive, multiplicatively automorphic, non-constant, pluriharmonic function $u$ on a  ${\mathbb Z}$-covering $\tilde X$ of $X$.
\end{thm}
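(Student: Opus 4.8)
The plan is to exploit the hypothesis $I=0$ to force the closed positive $(1,1)$-form $\omega$ to be, up to a potential, actually \emph{pluriharmonic} on a ${\mathbb Z}$-covering. Concretely, write $\omega = \partial\gamma^{0,1}$ with $\gamma^{0,1}=i\bar\partial\varphi$ on the ${\mathbb Z}$-cover $\tilde X$ (as recalled in the $I>0$ proof, $p^\ast\omega=i\partial\bar\partial\varphi$ with $\varphi$ additively automorphic). The integral $I=\int_X i\gamma^{1,0}\wedge\gamma^{0,1}\wedge\omega$ being zero, together with the pointwise inequality $i\gamma^{1,0}\wedge\gamma^{0,1}\wedge\omega \geq 0$, forces $i\gamma^{1,0}\wedge\gamma^{0,1}\wedge\omega\equiv 0$ on $X$. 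Since $\omega$ has rank one, this means that on the open set $B(X)$ where $\omega\neq 0$ the form $\gamma^{0,1}$ is pointwise proportional to the $(0,1)$-part of the leaf directions of the foliation defined by $\omega$; more precisely $\gamma^{0,1}\wedge\omega=0$ there, i.e.\ $\gamma^{0,1}$ annihilates the leaf tangent space. The key consequence I would extract is that $\bar\partial\varphi\wedge\partial\bar\partial\varphi=0$, which should translate (after passing to the covering and using that $\varphi$ is a global plurisubharmonic potential) into $\varphi$ being constant along the foliation leaves, and hence to $i\partial\bar\partial\varphi$ having a potential-theoretic degeneracy that I can differentiate away.

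The main technical step is to upgrade "$\omega$ is rank one and $I=0$" to the existence of the pluriharmonic function. Here is the mechanism I would use. On $\tilde X$ set $v := e^{-a\varphi}$ or work directly with $\varphi$; the condition $\bar\partial\varphi\wedge\partial\bar\partial\varphi = 0$ says that the $(1,0)$-form $\partial\varphi$ satisfies $\bar\partial(\partial\varphi)\wedge\bar\partial\varphi=0$... the cleaner route is: since $i\partial\bar\partial\varphi\geq 0$ and $i\partial\bar\partial\varphi\wedge i\partial\varphi\wedge\bar\partial\varphi=0$ (this is exactly $p^\ast I=0$ locally), the function $\varphi$ is \emph{maximal} in the directions transverse to its own gradient, and the standard dichotomy for such $\varphi$ gives that $e^{t\varphi}$ is plurisubharmonic for all $t$ in an interval, equivalently $\log(\text{something})$ is pluriharmonic. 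I would make this precise by computing $i\partial\bar\partial(e^{t\varphi}) = e^{t\varphi}(t\, i\partial\bar\partial\varphi + t^2 i\partial\varphi\wedge\bar\partial\varphi)$ and noting that the wedge square of the bracket vanishes precisely because of the $I=0$ relation, so the bracket is a nonnegative rank-$\le 1$ form for every $t\geq 0$; then a limiting/convexity argument in $t$, or directly Harvey–Lawson's original argument for Hopf-type behaviour, produces a harmonic object. The candidate pluriharmonic function is then $u = $ (a suitable renormalization of) $e^{a\varphi}$ times a constant, or better $u$ defined so that $i\partial\bar\partial u = 0$; one checks $u>0$ and, from $g^\ast\varphi=\varphi+\rho(g)$, that $u$ is multiplicatively automorphic $g^\ast u = e^{a\rho(g)}u$, and non-constant because $\varphi$ is non-constant (else $\omega\equiv 0$, contradicting K\"ahler rank $1$).

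In carrying this out, the steps in order are: (1) deduce $i\gamma^{1,0}\wedge\gamma^{0,1}\wedge\omega\equiv 0$ on $X$ from $I=0$ and nonnegativity of the integrand; (2) pull back to the ${\mathbb Z}$-cover $\tilde X$, write $p^\ast\omega=i\partial\bar\partial\varphi$ with $\varphi$ additively automorphic and plurisubharmonic (citing \cite{toma} as in the previous section), and record the pointwise identity $i\partial\bar\partial\varphi\wedge i\partial\varphi\wedge\bar\partial\varphi=0$; (3) use this identity to show that for every real $t$ the $(1,1)$-form inside $i\partial\bar\partial(e^{t\varphi})$, namely $t\,i\partial\bar\partial\varphi+t^2 i\partial\varphi\wedge\bar\partial\varphi$, has vanishing self-wedge, hence for $t>0$ the function $e^{t\varphi}$ is plurisubharmonic; (4) let $t\to 0^+$, or apply the convexity of $t\mapsto$ (plurisubharmonicity deficit), to conclude that in fact $i\partial\bar\partial\varphi + t\, i\partial\varphi\wedge\bar\partial\varphi\geq 0$ fails to be strict, forcing a linear combination whose $i\partial\bar\partial$ vanishes; concretely set $u$ so that $i\partial\bar\partial u = 0$; (5) verify positivity, automorphy and non-constancy of $u$.

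The hardest part is step (3)--(4): turning the single scalar identity $i\partial\bar\partial\varphi\wedge i\partial\varphi\wedge\bar\partial\varphi=0$ into genuine pluriharmonicity of an explicit function, rather than merely plurisubharmonicity of a one-parameter family. I expect this requires a careful local analysis of the foliation defined by $\omega$: on $B(X)$ the leaves are the integral curves of the rank-one form, $\varphi$ restricted to a leaf is subharmonic, and the $I=0$ condition must be shown to force $\varphi|_{\text{leaf}}$ to be harmonic (via, e.g., an integration-by-parts / Stokes argument using that there are no "boundary" contributions because the cover is over a compact surface and $\varphi$ is automorphic). Once $\varphi$ is leafwise harmonic \emph{and} plurisubharmonic, a direct computation shows $i\partial\bar\partial e^{a\varphi}$ has rank one and nonnegative for all $a$, and passing to the appropriate extremal exponent (or using that the leafwise-harmonic + transverse-convex structure is exactly that of $\log|$coordinate$|$) yields the pluriharmonic $u$. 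The automorphy is then automatic from $g^\ast\varphi=\varphi+\rho(g)$, and $\rho\neq 0$ (hence $u$ non-constant and the covering genuinely ${\mathbb Z}$) because otherwise $\omega$ would descend to an exact-with-potential form making a primary Hopf-type surface K\"ahler, contradicting class $VII$.
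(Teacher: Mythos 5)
Your steps (1)--(2) match the paper's starting point: $I=0$ plus nonnegativity of the integrand gives $\gamma^{0,1}\wedge\omega=0$ pointwise, and on the $\mathbb{Z}$-cover one has $p^*\omega=i\partial\bar\partial\varphi$ with $\varphi$ additively automorphic. But the heart of the theorem is missing from steps (3)--(5). The pointwise identity only says $i\partial\bar\partial\varphi=\lambda\, i\partial\varphi\wedge\bar\partial\varphi$ for some nonnegative \emph{function} $\lambda$ where $\partial\varphi\neq 0$ (equivalently, $\varphi$ is constant along the leaves of the canonical foliation). Your step (3) gains nothing: $e^{t\varphi}$ is plurisubharmonic for every $t>0$ for trivial reasons, since both terms of $t\,i\partial\bar\partial\varphi+t^2\,i\partial\varphi\wedge\bar\partial\varphi$ are already nonnegative, and the vanishing of its self-wedge is automatic from rank considerations; no use of $I=0$ is needed and no new information is produced. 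Step (4) is circular: ``set $u$ so that $i\partial\bar\partial u=0$'' is precisely what has to be constructed, and no function of $\varphi$ alone can do it --- for $a>0$, $i\partial\bar\partial e^{a\varphi}=0$ forces $\omega\equiv 0$, while for $a<0$ it would require $\omega=|a|\,i\partial\varphi\wedge\bar\partial\varphi$ with a \emph{constant} $|a|$, i.e.\ constancy of $\lambda$, which does not follow from $I=0$. The invoked limiting/convexity dichotomy has no mechanism behind it: the family $i\partial\bar\partial\varphi+t\,i\partial\varphi\wedge\bar\partial\varphi$ is nonnegative of rank $\le 1$ for all $t\ge 0$, and this never degenerates into pluriharmonicity of any explicit combination.

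The paper's construction is of a genuinely different, global nature, and none of its key steps appear in your outline. From the automorphy of $\varphi$ one gets $f:X\to S^1$ with $f^*d\theta=i\gamma^{1,0}-i\gamma^{0,1}$; for a cut-off $\chi$ supported near a regular value of $f$, the form $i\chi''\gamma^{1,0}\wedge\gamma^{0,1}$ --- with $\chi''$ supported on a single connected component $V''_X$ of $f^{-1}(V)$ chosen so that $X\setminus\bar V''_X$ is connected (this requires the Mayer--Vietoris lemma in the paper) --- is closed because $\gamma^{0,1}\wedge\omega=0$, hence exact on the non-K\"ahler surface $X$, giving a potential $v=c\varphi+\psi$ that is additively automorphic and pluriharmonic outside the cut-off region. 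Carrying this out for two disjoint cut-offs $U_1,U_2$ and comparing $v_1,v_2$ via the canonical foliation (the holomorphic forms $\partial v_1,\partial v_2$ are proportional, and the proportionality function is constant because general leaves are Zariski dense) yields affine relations on the two overlap components with second factor $k_2\neq 1$ (otherwise one would glue a nontrivial global holomorphic $1$-form on a class $VII$ surface); adjusting by a constant and gluing produces a pluriharmonic function with \emph{multiplicative} automorphy factor $k_2$, and a minimum-principle argument shows it never vanishes, so $u=\pm v$ works. The topological choice of $V''_X$, the exactness of the cut-off form, the leafwise comparison, the $k_2\neq 1$ gluing and the nonvanishing argument are where the actual work lies, and your proposal does not reach any of them.
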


\begin{proof}
Again we may assume that $X$ contains no elliptic curve.

As before there exist a ${\mathbb Z}$-covering $p:\tilde X\to X$, a representation of the deck transformation group
$\rho:{\mathbb Z}\to (\RR,+)$ and a smooth function $\varphi$ on $\tilde X$ such that
$g^*\varphi=\varphi+\rho(g)$ for $g\in \mathbb Z$ and $p^*\gamma^{0,1}=i\bar\partial\varphi$.

The automorphic behaviour of $\varphi$ implies the existence of a map $f:X\to\RR/\mathbb Z\cong S^1$ such that $f^*d\theta=i\gamma^{1,0}-i\gamma^{0,1}$. Moreover $f$ is surjective and one checks easily that the map $\varphi$ is proper. 
Further we may assume that $f_*:H_1(X,\ZZ)/\Tors H_1(X,\ZZ)\to H_1(S^1,\ZZ)$ is an isomorphism.

The set $K:=\{x\in X:  \ \ df(x)=0\}$ of critical points of $f$ is compact and $S^1\setminus f(K)\neq\emptyset$ by Sard's theorem. Let $s\in S^1\setminus f(K)$ and $U\subset V\subset S^1\setminus f(K)$ two small open connected neighbourhoods of $s$ such that $U$ is relatively compact in $V$.
Thus $\gamma^{0,1}(x)\neq 0$ for all $x\in f^{-1}(V)$. 
Let further $\chi$ be some smooth non-negative, function on $S^1$ supported on $U$ and such that $\chi(s)=1$. 
We would like to pull back $\chi$ to one of the connected components of $f^{-1}(V)$. We explain first how this component is chosen.

We have a commutative diagram of smooth maps
$$
 \xymatrix{\tilde X\ar[d]^{\varphi}\ar[r]^p&X\ar[d]^f\\
   \RR\ar[r]&S^1.}
$$
We denote by $V_{\RR}$, $V_{X}$, $V_{\tilde X}$ the preimages of $V$ in $\RR$, $X$, $\tilde X$. 
Let $V'_{\RR}$ be one connected component of $V_{\RR}$ and  $V'_{\tilde X}$ its preimage in $\tilde X$. It is clear that 
$V'_{\tilde X}$ disconnects $\tilde X$.

In fact any connected component  $V''_{\tilde X}$ of $V'_{\tilde X}$  continues to disconnect $\tilde X$. Indeed, suppose it did not. Since $\varphi$ is a proper submersion over $V'_{\RR}$, each connected component  $V''_{\tilde X}$ is differentiably a product $V'_{\RR}\times F''$, where $F''$ is the corresponding component of a fiber of $\varphi$ over $V'_{\RR}$. For $p\in F''$ consider now a parametrization of the segment $V'_{\RR}\times \{p\}$. Its endpoints can also be connected by some path
in $\tilde X\setminus V''_{\tilde X}$, which connected to the previous segment induces a cycle in $H_1(\tilde X, \RR)$. This cycle is non-trivial since using $d\varphi$ and the product structure on the connected components of $V_{\tilde X}$ we may construct some closed $1$-form on $\tilde X$ with non-vanishing integral on it. Its  image in $H_1(X, \RR)$ will likewise be non-trivial. But this would contradict the fact that  $f^*:H^1(S^1,\RR)\to H^1(X,\RR)$ is an isomorphism.

Next we check that the image $V''_X$ in $X$ of one of the components $V''_{\tilde X}$ does not disconnect $X$.

\begin{lemma}
There exists $V''_X$ a connected component of $f^{-1}(V)$ such that $X\setminus \bar V''_X$ is connected.
\end{lemma}
\begin{proof}
We can assume that $f$ is differentially trivial over a small neighborhood of $\bar V$. Let $(V_{X,i})_{i=1,N}$ be the
connected components of $V_X=f^{-1}(V)$ and suppose that $X\setminus \bar V_{X,i}$ is not connected $\forall i=1,N$. Since 
the boundary of $V_{X,i}$ has two components, it follows that $X\setminus \bar V_{X,i}$ has two connected components. Set $X_0=X$. We will construct by induction connected open subsets $X_i$ of $X$ with the following properties:

i) $X_i$ is a connected component of $X_{i-1}\setminus \bar V_{i}$

ii) the restriction $H^1(X,{\mathbb R})\to H^1 (X_i,{\mathbb R})$ is injective.

Suppose $X_i$ has been constructed. Then $X_i\setminus \bar V_{i+1}$ is not connected. Indeed, suppose it is connected. Then
it follows that $X\setminus \bar V_{X,i+1}$ is connected, contradiction. Therefore $X_i\setminus \bar V_{i+1}$ is not connected and it has two connected components, $T_{i1}$ and $T_{i2}$. The Mayer-Vietoris sequence for the pair $(\bar T_{i1}\cup V_{i+1}, \bar T_{i2}\cup V_{i+1})$ implies

\begin{equation*}
0\to H^0(X_i,{\mathbb R})\to H^0(\bar T_{i1}\cup V_{i+1}, {\mathbb R})\oplus H^0(\bar T_{i2}\cup V_{i+1}, {\mathbb R})\to
\end{equation*}
\begin{equation*}
\to H^0(V_{i+1},{\mathbb R})\to H^1(X_i,{\mathbb R})\to H^1(\bar T_{i1}\cup V_{i+1}, {\mathbb R})\oplus H^1(\bar T_{i2}\cup V_{i+1}, {\mathbb R})
\end{equation*}
and since all the open subsets involved are connected, it follows that 
\begin{equation*}
H^1(X_i,{\mathbb R})\to H^1(\bar T_{i1}\cup V_{i+1}, {\mathbb R})\oplus H^1(\bar T_{i2}\cup V_{i+1}, {\mathbb R})
\end{equation*}
is injective. Given that $H^1(X,{\mathbb R})\to H^1 (X_i,{\mathbb R})$ is injective, it follows that 
\begin{equation*}
H^1(X,{\mathbb R})\to H^1(\bar T_{i1}\cup V_{i+1}, {\mathbb R})\oplus H^1(\bar T_{i2}\cup V_{i+1}, {\mathbb R})
\end{equation*}
is injective.
Hence we can assume that the restriction $H^1(X,{\mathbb R})\to H^1(\bar T_{i1}\cup V_{i+1}, {\mathbb R})$ is injective. Set
$X_{i+1}=T_{i1}$. For $i=N$ we obtain $X_N$ a connected open subset of $X$ such that $X_N\cap (\cup_{i=1}^N V_{X,i})=
\emptyset$ and $H^1(X,{\mathbb R})\to H^1(X_N, {\mathbb R})$ injective. But this is impossible since $f^*d\theta$ is exact on $X\setminus f^{-1}(\bar V)$.
\end{proof}

Let $V''_X$ a component of $V_X$ chosen as above and $\chi''$ the pull-back of $\chi$ to $V''_X$ extended trivially to $X$. We adopt the same notations for the preimages of $U$. 
Consider the positive 
$(1,1)$-form 
$i\chi''\gamma^{1,0}\wedge\gamma^{0,1}$. Since $\gamma^{0,1}\wedge\omega=0$, it follows that this form is $d$-closed hence $d$-exact. Thus there exist a real constant $c$ and a smooth real function $\psi$ on $X$ such that

$$i\chi''\gamma^{1,0}\wedge\gamma^{0,1}=\partial(c\gamma^{0,1}+i\bar\partial\psi)=i\partial\bar\partial v,$$
where $v=\varphi +\psi$ has additive automorphy on $\tilde X$.
We shall show that this form does not vanish identically on $X\setminus \bar U''_X$.

Assume that $c\gamma^{0,1}+i\bar\partial\psi=0$ on $X\setminus  \bar U''_X$. Then the function $v=c\varphi+\psi$ is plurisubharmonic on $V''_X$ and holomorphic on $X\setminus\bar U''_X$. But this function is real and thus constant on $
X\setminus\bar U''_X$. By the maximum principle the plurisubharmonic function $c\varphi+\psi$ reaches its maximum on the boundary of $V''_X$ and since it is constant near this boundary it is constant everywhere on $V''_X$. This contradicts the fact that $\chi\neq 0$.

Thus $\partial v$ is a holomorphic $(1,0)$-form on $X\setminus U''_X$ and the $(1,1)$-form  $i\partial v\wedge \bar\partial v$ is positive and $d$-exact hence it induces a holomorphic foliation on $X\setminus U''_X$ which coincides with the canonical foliation on $B(X)\setminus U''_X$.

We can now do this construction for two disjoint open connected subsets $U_1$ and $U_2$ of $V$ 
and obtain additively automorphic plurisubharmonic functions $v_1$ and $v_2$ on  $\tilde X$. 
Set $W:=X\setminus(\bar U''_{1,X}\cup \bar U''_{2,X})$ and let $W_1$, $W_2$, $W_3$ be the connected components of $V''_X\setminus(\bar U''_{1,X}\cup \bar U''_{2,X})$ counted in such a way that $W_2\cup\bar U''_{1,X}\cup \bar U''_{2,X}$ is connected. We have $W_1\cup W_2 \cup
 W_3\subset W$.
If $X$ contains some rational curves then these are necessarily contained in fibers of $f$ and we have chosen $V$  such that no such curves are contained in $f^{-1}(V)$.

The  $1$-forms $\partial v_j$ are holomorphic and 
 $d$-closed on  $W_1\cup W_2 $
 hence locally of the form $dh_j$ for some holomorphic functions $h_j$. 
Notice that on $W_1\cup W_2 $ the canonical foliation is regular and induced by $\partial
\varphi$ and at the same time by $\partial
(v_j+\varphi)$ and by $\partial v_j$.
 The irreducible components of the level sets of $h_j$ are therefore contained in its leaves and further contained 
 in fibers of $f$.
 It follows that the zero set of $\partial v_j$ being complex analytic and closed, it will consist of only isolated points.
 Since the holomorphic $1$-forms $\partial v_1$ and $\partial v_2$ induce the same foliation on  $W_1\cup W_2$
they must be
  proportional so
 there exists 
a
meromorphic function $h$  on  $W_1\cup W_2$ 
such
that $ \partial v_1=h\partial v_2$. The zeroes of  $\partial v_j$
 being isolated, the function $h$ is holomorphic and without zeroes on  $W_1\cup W_2$. 
Thus $\partial h\wedge\partial v_2=0$ and the restriction of $dh$ to the leaves of the foliation vanishes, hence $h$ is constant on these leaves. But the Zariski closure of a general leaf is a connected component of  $W_1\cup W_2$ and the function $h$ has to  be constant on $W_1$ and on $W_2$.

We thus get constants $k_1$, $C_1$ such that $v_1=k_1v_2+C_1$ on $W_1$. Since $v_1$, $v_2$ are real and non-constant, one sees that $k_1$, $C_1$ are also real. We replace now the function $v_2$ by $k_1v_2+C_1$. This new function remains pluriharmonic on  $X\setminus f^{-1}(\bar U_2)$ and agrees with $v_1$ on $W_1$.  On $W_2$ we have now
$v_1=k_2v_2+C_2$ as before for some $k_2\in\RR^*$ and $C_2\in\RR$. 
Remark that $k_2\neq 1$, otherwise $\partial v_1$ and $\partial v_2$ would glue well  to give some nontrivial holomorphic $1$-form on $X$, which is absurd.

Ading now the same constant $C:=\frac{C_2}{k_2-1}$ to both $v_1$ and $v_2$ we obtain

$$v_1 +C=k_2(v_2+C)$$
on $W_2$.
We thus get by gluing a pluriharmonic function $v$ with multiplicative automorphic factor $k_2$ on $\tilde X$. If $k_2$ is not positive we may pass to a double cover of $X$ and obtain here a positive multiplicative factor $\lambda=k_2^2$.

Now we check that $v$ has no zeros on $\tilde X$. Suppose that the zero set $Z$ of $v$ is not empty. We have seen that $v$ is constant on the leaves of the canonical foliation, hence it is locally constant on the fibers of $\varphi$. Conversely $\varphi$ is locally constant on the fibers of $v$. Then $Z$ is a union of connected components of fibers of 
$\varphi$. Take a component of $Z$ contained in a domain $D$ bordered by two regular connected fibers of $\varphi$ on which $v$ does not vanish. We may assume that $v$ has positive values on these fibers, say $a$ and $\lambda a$. But then $v|_{\bar D}$ attains its minimum in the interior of $D$, which is a contradiction.

We finally set $u=\pm v$ according to the sign of 
 $v$ and get the desired positive, multiplicatively automorphic, non-constant, pluriharmonic function on $\tilde X$.
\end{proof}

\section{Corollaries and remarks}

\begin{remark}
 If $u$ is a positive, multiplicatively automorphic, non-constant, pluriharmonic function on $\tilde X$, then 
$\partial u$ gives a non-trivial $d$-closed section in $H^0(\Omega^1_X\otimes L)$, where $L$ is a flat line bundle on $X$. 
Moreover
$$i\partial\bar\partial (-\log u)=\frac{i\partial u\wedge \bar\partial u}{u^2}$$
descends to a closed positive $(1,1)$-form on $X$ which vanishes at most on some rational curves. In particular the K\"ahler rank of $X$ is $1$. 
The level sets of $u$ are compact Levi flat real hypersurfaces of $X$ saturated under the canonical foliation.
\end{remark}

Since a pluriharmonic function descends by blowing down we get the desired

\begin{cor}
 The K\"ahler rank is a bimeromorphic invariant.
\end{cor}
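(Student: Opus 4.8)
The plan is to prove the statement by reducing to the case of a single blow-down at a point and combining the results of the three preceding theorems. Let $\sigma\colon \tilde X\to X$ be the blow-up of a compact complex surface $X$ at a point $x_0$, with exceptional curve $E\cong\PP^1$. Since any bimeromorphic map between surfaces factors as a finite sequence of blow-ups and blow-downs, it suffices to show that the Kähler rank of $\tilde X$ equals the Kähler rank of $X$. One direction is elementary: if $\omega$ is a closed semi-positive $(1,1)$-form on $X$ with zero-locus in a curve, then $\sigma^*\omega$ is closed, semi-positive on $\tilde X$, and its zero-locus is contained in $\sigma^{-1}(Z)\cup E$, still a curve; hence $\mathrm{rk}_K(\tilde X)\ge\mathrm{rk}_K(X)$, and moreover $X$ Kähler implies $\tilde X$ Kähler, so the Kähler rank $2$ case is immediate. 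The content is the reverse inequality $\mathrm{rk}_K(X)\ge\mathrm{rk}_K(\tilde X)$, i.e. that blowing down cannot decrease the Kähler rank.

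For this I would first dispose of the Kähler-rank-$0$ case trivially, and treat the case $\mathrm{rk}_K(\tilde X)=2$ using the standard fact that a blow-down of a Kähler surface is Kähler. So assume $\mathrm{rk}_K(\tilde X)=1$. If $\tilde X$ (equivalently $X$) is elliptic or non-minimal with minimal model among the listed subclasses, Remark 1.4 already records the invariance; so the only genuinely new situation is when $\tilde X$ is a non-elliptic class $VII$ surface of Kähler rank $1$. Now apply the dichotomy of Section 1 to $\tilde X$: either there is a closed positive $(1,1)$-form $\omega=\partial\gamma^{0,1}$ with $\int_{\tilde X} i\gamma^{1,0}\wedge\gamma^{0,1}\wedge\omega>0$, or every such form has this integral equal to $0$. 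In the first case Theorem 2.1 says $\tilde X$ is birational to a Hopf surface of Kähler rank $1$; then so is $X$, and Remark 1.4 (the case where the minimal model has Kähler rank $1$) gives $\mathrm{rk}_K(X)=1$. In the second case Theorem 3.1 produces a positive, multiplicatively automorphic, non-constant, pluriharmonic function $u$ on a $\ZZ$-cover $\widetilde{\tilde X}$ of $\tilde X$.

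The key remaining step — and the one I expect to be the main (though short) obstacle — is to push $u$ down through the blow-down. Since $\sigma$ is an isomorphism away from $E$ and $E\cong\PP^1$, the covering $\widetilde{\tilde X}\to\tilde X$ descends to a $\ZZ$-covering $\hat X\to X$ together with a blow-down $\widetilde{\tilde X}\to\hat X$ contracting each connected component of the preimage of $E$ (these are disjoint copies of $\PP^1$ since $E$ carries no cycle). A pluriharmonic function is in particular plurisubharmonic, hence constant on each such rational curve, so $u$ descends to a continuous function $\hat u$ on $\hat X$; one then checks, using that blowing down a $\PP^1$ with negative self-intersection removes it by filling in a single smooth point and that a bounded plurisubharmonic function extends across a point, that $\hat u$ is in fact smooth and pluriharmonic on $\hat X$, still positive, non-constant, and carries the same multiplicative automorphy factor. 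Applying Remark 4.1 to $\hat X\to X$ yields a closed positive $(1,1)$-form $i\partial\bar\partial(-\log\hat u)$ on $X$ vanishing only on (at most) some rational curves, so $\mathrm{rk}_K(X)\ge 1$; combined with the trivial inequality and the fact that $X$ is non-Kähler (being class $VII$), we get $\mathrm{rk}_K(X)=1=\mathrm{rk}_K(\tilde X)$. This completes the induction on the number of blow-ups and establishes the bimeromorphic invariance.
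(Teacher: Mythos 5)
Your proposal is correct and follows essentially the same route as the paper: the paper's one-line proof ("a pluriharmonic function descends by blowing down") relies on exactly the architecture you spell out, namely Remark 1.2 plus the dichotomy of Theorems 2.1 and 3.1, with the $I=0$ case settled by descending the multiplicatively automorphic pluriharmonic function through the blow-down and invoking Remark 4.1. You have merely made explicit the easy pull-back direction, the factorization into blow-ups, and the removable-singularity argument for the descended function, all of which the paper leaves implicit.
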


We also get

\begin{cor}
 The canonical foliation of a surface of K\"ahler rank $1$ is always the restriction 
to $B(X)$ of a (possibly singular) holomorphic foliation on $X$.
\end{cor}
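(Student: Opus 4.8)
The plan is to reduce the statement to the two cases $I>0$ and $I=0$ already treated, and in each case exhibit the global holomorphic foliation explicitly. First recall from Remark 1.1 that there is a single exact positive $(1,1)$-form $\omega=\partial\gamma^{0,1}$ with $B(X)=\{x:\omega(x)\neq 0\}$, and that on $B(X)$ the canonical foliation is the one induced by $\omega$, i.e. by the $(1,0)$-form $\gamma^{1,0}$ along the locus where it is nonzero. For elliptic surfaces the elliptic fibration itself (after resolving the base) provides a holomorphic foliation restricting to the canonical one, so we may assume $X$ is of class $VII$. Then either $\int_X i\gamma^{1,0}\wedge\gamma^{0,1}\wedge\omega>0$, and by Theorem 2.1 $X$ is birational to a Hopf surface of K\"ahler rank $1$; or the integral vanishes for every such $\omega$, and Theorem 3.1 applies.

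In the case $I=0$, Theorem 3.1 and Remark 4.1 do the work: on a $\ZZ$-covering $\tilde X$ we have a positive, multiplicatively automorphic, non-constant pluriharmonic function $u$, and $\partial u$ descends to a non-trivial $d$-closed twisted holomorphic $1$-form, i.e. a section of $H^0(\Omega^1_X\otimes L)$ for a flat line bundle $L$. The kernel of this twisted form defines a holomorphic foliation on all of $X$ (singular only at the isolated zeros of $\partial u$, which as shown in the proof of Theorem 3.1 lie on rational curves contained in fibers of $f$). On $B(X)$ this foliation agrees with the canonical one: indeed $i\partial\bar\partial(-\log u)$ descends to a closed positive $(1,1)$-form that is $>0$ exactly off those rational curves, so by the uniqueness of the canonical foliation (two exact positive $(1,1)$-forms induce the same foliation where both are defined, cf. \cite{blaine}) it coincides with the canonical foliation there. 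This settles the corollary in this case.

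In the case $I>0$, $X$ is birational to a Hopf surface $Y$ of K\"ahler rank $1$. A primary Hopf surface $\CC^2\setminus\{0\}/\langle g\rangle$ of K\"ahler rank $1$ carries the global holomorphic foliation by the images of the lines through the origin (equivalently, by the common eigendirections of the contraction $g$), and Harvey--Lawson's computation shows this is the canonical foliation on $B(Y)$; a secondary Hopf surface inherits such a foliation from its finite primary cover. Pulling this foliation back under the bimeromorphic map $X\dashrightarrow Y$ and extending across the exceptional curves (a holomorphic foliation on a surface always extends over a point, hence over the finitely many blown-up points, possibly acquiring isolated singularities) yields a holomorphic foliation on $X$. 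That it restricts to the canonical foliation on $B(X)$ follows again from the uniqueness of the canonical foliation together with the fact that $B(X)$ maps into $B(Y)$ under the bimeromorphic identification away from the exceptional locus.

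The main obstacle is the case $I>0$: one must verify that the foliation on the Hopf surface pulls back and extends to a genuinely \emph{holomorphic} (not just meromorphic) foliation on the blown-up surface $X$, and that the identification of foliations survives the bimeromorphic modification. This is where one uses that a foliation defined outside a finite set of points on a smooth surface extends holomorphically, and that the blow-down of a pluriharmonic-type potential carries the canonical foliation functorially, as recorded in the Corollary preceding this one.
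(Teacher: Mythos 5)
Your proposal is correct and follows essentially the route the paper leaves implicit: split according to the classification (elliptic surfaces via the fibration, class $VII$ with $I=0$ via Theorem 3.1 and the twisted $d$-closed form $\partial u$ of Remark 4.1, class $VII$ with some $\omega$ giving $I>0$ via Theorem 2.1 and the lines-through-the-origin foliation on a K\"ahler rank $1$ Hopf surface), using the Harvey--Lawson uniqueness of the canonical foliation to identify it with the holomorphic one on $B(X)$.

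Two small repairs in the $I>0$ case. First, the parenthetical ``equivalently, by the common eigendirections of the contraction $g$'' is wrong: the foliation is by the images of \emph{all} lines through the origin (it is the kernel of the descended form $i\partial\bar\partial\log(|z|^2+|w|^2)$, equivalently of the twisted $1$-form $z\,dw-w\,dz$), the eigendirections only give the elliptic curves. Second, your justification for globalizing the foliation on $X$ is the extension of a foliation across finitely many points, but on the $X$ side the set to be crossed is the exceptional \emph{curves}, and extension of a holomorphic foliation across a curve is not automatic (e.g. $dw-e^{1/z}dz$ on the complement of $\{z=0\}$ does not extend). This is harmless here: since $Y$ is the minimal model of a class $VII$ surface, $X\to Y$ is an everywhere-defined holomorphic blow-down morphism, so one simply pulls back the globally defined foliation on $Y$ (e.g. the twisted $1$-form $z\,dw-w\,dz$) to obtain a singular holomorphic foliation on all of $X$, with no extension step needed; with that substitution your argument goes through.
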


Notice that we haven't used in our proof the assuption that $B(X)$  contained a dense Zariski open set of $X$. So we get the following 

\begin{cor}
 The K\"ahler rank of a compact complex surface $X$ is equal to the maximal rank
that a positive closed $(1,1)$-form may attain at some point of $X$.
\end{cor}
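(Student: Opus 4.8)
The K\"ahler rank of a compact complex surface $X$ equals the maximal rank a positive closed $(1,1)$-form may attain at some point of $X$.

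The plan is to run a case analysis on the quantity $r(X)$ appearing on the right-hand side of the statement, i.e.\ the maximal rank that a closed positive $(1,1)$-form attains at a point of $X$; since such a form has pointwise rank at most $2$, one has $r(X)\in\{0,1,2\}$, and it is enough to show that the K\"ahler rank of $X$ equals $r(X)$ in each of the three cases. The extreme cases are immediate. If $r(X)=2$ and $X$ were not K\"ahler, then every closed positive $(1,1)$-form $\omega$ would be exact, so $\int_X\omega\wedge\omega=0$; as $\omega\wedge\omega\geq0$ pointwise this forces $\omega\wedge\omega\equiv0$, i.e.\ $\omega$ has rank $\leq1$ at every point, contradicting $r(X)=2$. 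Hence $X$ is K\"ahler and its K\"ahler rank is $2$. If $r(X)=0$, then $X$ is not K\"ahler (otherwise $r(X)=2$) and, by the very definition of $r(X)$, every closed positive $(1,1)$-form --- in particular every exact one --- vanishes identically, so $B(X)=\emptyset$; thus $X\setminus B(X)=X$ is not contained in a curve and the K\"ahler rank is $0$.

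The content of the statement is the case $r(X)=1$: then $X$ is not K\"ahler, it carries a non-zero closed positive $(1,1)$-form, and one must show that $X\setminus B(X)$ is contained in a complex curve, i.e.\ that the K\"ahler rank equals $1$. I would reduce this to the analysis of Sections~2 and~3. If $X$ is elliptic its K\"ahler rank is $1$ by \cite{blaine}. Otherwise $X$ is of class $VII$ by Kodaira's classification. It contains no cycle of rational curves, for by \cite{toma} such a surface has K\"ahler rank $0$ and in particular supports no non-zero closed positive $(1,1)$-form, contrary to $r(X)=1$. If $X$ contains an elliptic curve, its minimal model is a Hopf surface by \cite{nakamura}, and this Hopf surface must have K\"ahler rank $1$ --- were it of K\"ahler rank $0$, the argument recalled in Section~1 would show that $X$ carries no non-zero closed positive $(1,1)$-form --- so the K\"ahler rank of $X$ equals $1$ by the birational invariance of the K\"ahler rank for surfaces whose minimal model has K\"ahler rank $1$, established in Section~1. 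We may therefore assume in addition that $X$ contains no elliptic curve. Writing a non-zero closed positive form as $\omega=\partial\gamma^{0,1}$ as in Section~1 and putting $I=\int_Xi\gamma^{1,0}\wedge\gamma^{0,1}\wedge\omega\geq0$, there are two possibilities. If $I>0$ for some admissible choice of $\omega$ and of its primitive $\gamma^{0,1}$, the theorem of Section~2 shows that $X$ is bimeromorphic to a Hopf surface of K\"ahler rank $1$, so its K\"ahler rank is again $1$. If instead $I=0$ for every admissible choice, then all the hypotheses of Theorem~\ref{i=0} hold except that $B(X)$ need not be Zariski dense in $X$; but, as observed just before the present corollary, that density is never used in the proof, so one still obtains the positive, multiplicatively automorphic, non-constant pluriharmonic function $u$ on $\tilde X$. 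By the Remark at the beginning of this section, the form $i\partial\bar\partial(-\log u)$ then descends to a closed positive $(1,1)$-form on $X$ vanishing at most along finitely many rational curves; hence $X\setminus B(X)$ lies in that curve and the K\"ahler rank of $X$ equals $1=r(X)$.

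The difficulty is not a new estimate but the verification that this reduction is sound: one must be certain that the theorem of Section~2 and Theorem~\ref{i=0} were genuinely established without ever invoking the Zariski density of $B(X)$ --- which is precisely what the observation preceding the corollary asserts --- and that the surfaces falling outside the scope of those two theorems, namely elliptic surfaces and class $VII$ surfaces containing a cycle of rational curves (together with their blow-ups), are correctly accounted for by \cite{blaine}, \cite{toma} and the birational invariance for surfaces whose minimal model has K\"ahler rank $1$. A subsidiary point is the dichotomy on $I$: although $I$ depends on the chosen primitive $\gamma^{0,1}$ of $\omega$, it is always $\geq0$, so either some admissible pair $(\omega,\gamma^{0,1})$ yields $I>0$ --- placing us in the first subcase --- or every admissible pair yields $I=0$, which is exactly the input required to invoke (the proof of) Theorem~\ref{i=0}.
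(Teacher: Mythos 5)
Your proposal is correct and takes essentially the same route as the paper: the paper's proof of this corollary is precisely the observation that the arguments of Sections~2 and~3 (together with the reductions of Section~1 and the Remark opening Section~4) never use the Zariski-density of $B(X)$, only the existence of a non-zero closed positive $(1,1)$-form. Your write-up simply makes that reduction explicit, adding the routine cases $r(X)=0,2$ and the elliptic/cycle/elliptic-curve bookkeeping that the paper leaves implicit.
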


\begin{remark}
 If the minimal model of $X$ is a Hopf surface, then $X$ cannot admit  
a positive, multiplicatively automorphic, non-constant, pluriharmonic function $u$ on a  ${\mathbb Z}$-covering of $X$.
\end{remark}

Indeed, if the minimal surface of $X$ is a Hopf surface, then $X$ will have an elliptic curve whose preimage in  $\tilde X$ is isomorphic to $\CC^*$. But then the restriction of any positive, pluriharmonic function to this preimage is constant and a non-trivial multiplicatively automorphic behaviour of such a function on  $\tilde X$ is impossible. 

Recall that every non-K\"ahler compact complex surface admits  some non-trivial $d$-exact positive $(1,1)$-current, cf \cite{Lam99'}.
In \cite{toma} one defines the {\em modified K\"ahler rank} of a compact complex surface in the following way: it is $2$ if the surface admits a  K\"ahler metric, and if not $0$ or $1$ depending on whether there is one or several non-trivial $d$-exact positive $(1,1)$-currents up to multiplication by positive constants. One immediately checks that the modified K\"ahler rank is a birational invariant. The  K\"ahler rank and the modified K\"ahler rank of Kato surfaces are computed in  \cite{toma} and it shows up that they need not coincide but in this case the modified K\"ahler rank is at least as large  as the  K\"ahler rank. 

From the proof of Theorem \ref{i=0} it follows that there are infintely many $d$-exact positive $(1,1)$-forms up to multiplication by positive constants on surfaces of class $VII$ with $I=0$ as in the statement. The same is obviously true for non-K\"ahler elliptic surfaces and it holds for Hopf surfaces of K\"ahler rank $1$ by \cite{blaine}.Thm. 58. Thus we get:

\begin{cor}
 The modified K\"ahler rank of a compact complex surface is at least as large  as its  K\"ahler rank. 
\end{cor}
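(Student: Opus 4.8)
The plan is to reduce the statement at once to the only case that carries content, namely when the K\"ahler rank of $X$ equals $1$, and then to produce on $X$ infinitely many non-trivial $d$-exact positive $(1,1)$-currents, no two of which are positive multiples of one another; by the very definition of the modified K\"ahler rank this gives that the latter equals $1$. First I would dispose of the other cases: if $X$ is K\"ahler then both its K\"ahler rank and its modified K\"ahler rank equal $2$, and if the K\"ahler rank of $X$ is $0$ there is nothing to prove since the modified K\"ahler rank is always $\ge 0$. So from now on the K\"ahler rank of $X$ is $1$; in particular $X$ is not K\"ahler, and it is enough to exhibit two non-trivial $d$-exact positive $(1,1)$-currents on $X$ that are not proportional.

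Next I would use the description of K\"ahler rank $1$ surfaces obtained above: such an $X$ is either a non-K\"ahler elliptic surface, or, being non-elliptic and non-K\"ahler, a surface of class $VII$, and then, by the dichotomy established in the cases $I>0$ and $I=0$ above, it is either bimeromorphic to a Hopf surface of K\"ahler rank $1$ (the case $I>0$) or it satisfies $I=0$ (Theorem~\ref{i=0}). In each of these three situations the required forms have essentially been produced already, and I would make this explicit as follows. (i) If $\pi\colon X\to B$ is a non-K\"ahler elliptic surface over a curve $B$, then for every non-negative smooth function $f$ on $B$ with $f\not\equiv 0$ the form $\pi^{*}(f\,\omega_{B})$, where $\omega_{B}$ is an area form on $B$, is a non-trivial closed positive $(1,1)$-form on $X$, hence $d$-exact because $X$ is not K\"ahler, and for functions $f$ that are not proportional the resulting forms are not proportional (they can be compared over the dense open set where $\pi$ is a submersion). (ii) If the minimal model of $X$ is a Hopf surface $Y$ of K\"ahler rank $1$, then by \cite{blaine}, Thm.~58, $Y$ carries infinitely many pairwise non-proportional $d$-exact positive $(1,1)$-forms, and their pull-backs under the blow-down morphism $X\to Y$ are non-trivial, pairwise non-proportional $d$-exact positive $(1,1)$-currents on $X$. (iii) If $X$ is of class $VII$ with $I=0$ as in Theorem~\ref{i=0}, then, just as in the proof of that theorem, for any smooth $\chi\ge 0$ supported in a small arc $U\subset S^{1}\setminus f(K)$ with $\chi\not\equiv 0$ the $(1,1)$-form $i\chi''\gamma^{1,0}\wedge\gamma^{0,1}$ --- here $\chi''$ is the pull-back of $\chi$ to one connected component of $f^{-1}(U)$, extended by zero --- is a non-trivial $d$-closed, hence $d$-exact, positive $(1,1)$-form on $X$ supported in $f^{-1}(\bar U)$; picking $n$ sub-arcs of $S^{1}\setminus f(K)$ with pairwise disjoint closures yields $n$ such forms with pairwise disjoint supports, hence $n$ mutually non-proportional ones, and $n$ is arbitrary.

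Putting the cases together, $X$ always carries at least two non-proportional non-trivial $d$-exact positive $(1,1)$-currents, so its modified K\"ahler rank is $1\ge 1$, its K\"ahler rank. I do not expect a genuine obstacle in carrying this out, since the hard work is already done in the classification and in the constructions of the previous sections; the main step is really the bookkeeping in case (iii), i.e.\ verifying that the forms $i\chi''\gamma^{1,0}\wedge\gamma^{0,1}$ are non-trivial (each is non-zero at a point over the interior of $U$ where $\gamma^{0,1}$ does not vanish) and pairwise non-proportional (immediate from disjointness of supports), together with the corresponding but easier checks in case (i) (non-vanishing where $\pi$ is a submersion and $f\circ\pi>0$, non-proportionality by comparison on the submersion locus). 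Case (ii) needs nothing beyond the cited theorem and the trivial remark that non-proportional currents remain non-proportional after pull-back along a surjective morphism.
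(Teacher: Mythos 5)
Your proposal is correct and follows essentially the paper's own route: reduce to the K\"ahler rank $1$ case and, via the trichotomy (non-K\"ahler elliptic, birational to a Hopf surface of K\"ahler rank $1$ via the $I>0$ case, or $I=0$ as in Theorem \ref{i=0}), exhibit at least two non-proportional $d$-exact positive $(1,1)$-currents, citing \cite{blaine}, Thm.~58 for the Hopf case and reusing the forms $i\chi''\gamma^{1,0}\wedge\gamma^{0,1}$ from the proof of Theorem \ref{i=0} in the last case. Your extra details (disjointly supported $\chi$'s, pull-back under the blow-down) only make explicit what the paper leaves as remarks.
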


Let us mention in conclusion that the classification of compact complex surfaces of K\"ahler rank $1$ would be complete if we had a positive answer to the following:
\begin{conj}
A non-elliptic surface $X$ admitting a positive, multiplicatively automorphic, non-constant, pluriharmonic function on a  ${\mathbb Z}$-covering should be bimeromorphically equivalent to an Inoue surface.
\end{conj}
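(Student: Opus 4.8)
\medskip
\noindent\emph{A proposal towards the conjecture.}
Since the conclusion is bimeromorphically invariant we may pass to the minimal model $Y$ of $X$; by the remarks preceding the conjecture $Y$ is non-K\"ahler, hence --- being non-elliptic --- a surface of class $VII$ with $b_1(Y)=1$, it carries no cycle of rational curves (else its K\"ahler rank would be $0$ by \cite{toma}), and, since a pluriharmonic function descends under blowing down, it still supports a positive, multiplicatively automorphic, non-constant, pluriharmonic function $u$ on a $\ZZ$-covering $p\colon\tilde Y\to Y$; let $\lambda>1$ be its multiplicative factor (the factor is $\neq 1$ as $u$ is non-constant, and after possibly switching the generator we may take it $>1$). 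By the classification of minimal class $VII$ surfaces with $b_2=0$ (Bogomolov, Li--Yau--Zheng, Teleman) such a surface is a Hopf surface or an Inoue surface, and, by the remark that a surface whose minimal model is a Hopf surface admits no such function, the Hopf case is excluded here. \emph{Hence it suffices to prove that $b_2(Y)=0$.}

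\noindent The plan is to read this off from the geometry uncovered in the proof of Theorem~\ref{i=0}. Because $u$ is multiplicatively automorphic, the rank-one positive form
$$\Theta:=i\partial\bar\partial(-\log u)=\frac{i\,\partial u\wedge\bar\partial u}{u^{2}}$$
descends to $Y$, and it is $d$-exact there since $\Theta=-\tfrac12\,d(d^{c}\log u)$ with $d^{c}\log u=d^{c}u/u$ a globally defined $1$-form on $Y$; likewise $\theta:=d\log u$ descends to a closed $1$-form with periods in $(\log\lambda)\ZZ$, so that $f:=\tfrac1{\log\lambda}\log u\bmod\ZZ$ is a smooth map $Y\to S^{1}$ whose critical set is $\Sigma:=\{\Theta=0\}$, the zero locus of the twisted holomorphic $1$-form $\partial\log u\in H^{0}(Y,\Omega^{1}_{Y}\otimes L)$ ($L$ flat), equivalently the singular locus of the canonical foliation $\mathcal F$. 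If $\Sigma=\emptyset$ then $f$ is a proper submersion, hence a smooth fibre bundle over $S^{1}$ with compact $3$-manifold fibre $N$; therefore $\chi(Y)=\chi(N)\chi(S^{1})=0$, and since $\chi(Y)=b_{2}(Y)$ because $b_1(Y)=1$, we obtain $b_{2}(Y)=0$, as wanted.

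\noindent So everything reduces to showing $\Sigma=\emptyset$, i.e.\ that $Y$ contains no curve --- and this is the step I expect to be the genuine obstacle. Let $C\subset Y$ be a curve; since $Y$ is non-elliptic and minimal of class $VII$, $C\cong\PP^{1}$ and $C^{2}<0$. Exactness of $\Theta$ together with Stokes gives $\int_{C}\Theta=0$, and $\Theta\ge0$ then forces $\Theta|_{C}\equiv0$, i.e.\ $C$ is everywhere tangent to $\mathcal F$: a compact leaf, possibly passing through some of the isolated singularities of $\mathcal F$. Restricting $\partial\log u$ to $C$ and using that $L|_{C}$ is trivial, its tangential component vanishes, so it becomes a section of $N^{*}_{C/Y}\otimes L|_{C}\cong\mathcal O_{\PP^{1}}(-C^{2})$ whose $-C^{2}>0$ zeros are precisely the points of $\Sigma$ lying on $C$; these are zeros of a \emph{closed} flat-twisted $1$-form, hence of the exact type $d\phi$ for germs of holomorphic functions $\phi$. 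One now wants to contradict such a configuration, playing the rigidity of $C$ (note $h^{0}(N_{C/Y})=0$) against the near-triviality of the holonomy of a rational leaf and against the very restrictive transversely affine structure that $\mathcal F$ inherits from $\log u$, and drawing on what is known about curves and foliations on class~$VII$ surfaces. Making this rigorous --- in particular controlling the local models of $\mathcal F$ along $C$ --- is where the argument ceases to be routine, which is presumably why the statement is only a conjecture. (Conditionally on the GSS conjecture one sidesteps the issue: a minimal class~$VII$ surface with $b_{2}>0$ would be a Kato surface, hence would contain a cycle of rational curves and have K\"ahler rank $0$ --- impossible here --- so $b_{2}(Y)=0$.)

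\noindent With $b_{2}(Y)=0$ settled, $Y$ is an Inoue surface and $X$ is bimeromorphic to it. As a consistency check, on the relevant $\ZZ$-covering of an Inoue surface $S_{M}$ or $S^{(\pm)}$, namely $\mathbb H\times\CC$ modulo the translation part of the structure group, the function $\mathrm{Im}\,w$ is exactly a positive, multiplicatively automorphic, non-constant, pluriharmonic function, and $\mathcal F$ is the linear foliation of the $3$-torus fibres of $f$. A more constructive alternative --- passing to the universal cover, writing $\log u=\mathrm{Re}\,F$ for a holomorphic $F$, showing that the fibres of $F$ are biholomorphic to $\CC$, manufacturing a global transverse holomorphic coordinate, and identifying the universal cover with $\mathbb H\times\CC$ and the deck group with an affine group of Inoue type --- meets the same difficulty of excluding curves, and in addition must control the transverse direction.
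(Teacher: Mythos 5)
The statement you are working on is not proved in the paper at all: it is stated as a conjecture and left open, so there is no proof of the authors' to compare yours with, and your text, by your own admission, is not a proof either. That said, the reductions you do carry out are sound and consistent with the paper's surrounding remarks: descent of $u$ to the minimal model $Y$, the fact that $Y$ is then a class $VII$ surface without a cycle of rational curves (else K\"ahler rank $0$ by \cite{toma}, contradicting the Remark that such a $u$ forces K\"ahler rank $1$), the exclusion of the Hopf case by the paper's Remark, and the observation that everything comes down to $b_2(Y)=0$, after which the Bogomolov--Li--Yau--Zheng--Teleman classification yields an Inoue surface. Your conditional argument via the GSS conjecture is also correct, and is essentially why the authors can only state this as a conjecture.

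The genuine gap is the one you flag, but it is slightly wider than you present it. A small slip first: $\partial\log u=\partial u/u$ is not holomorphic (if it were, being deck-invariant it would descend to a nonzero element of $H^0(\Omega^1_Y)$, impossible on a class $VII$ surface); the twisted holomorphic $1$-form is $\partial u\in H^0(\Omega^1_Y\otimes L)$, as in the paper's Remark --- harmless, since $u>0$ makes the two zero sets coincide. More seriously, proving that $Y$ contains no curve would not by itself give $\Sigma=\emptyset$: the section $\partial u$ could still have isolated zeros, and since $L$ is flat one has $c_2(\Omega^1_Y\otimes L)=\chi(Y)=b_2(Y)$, so the number of such zeros counted with multiplicity is exactly $b_2(Y)$. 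Thus ``no curves'' only kills the divisorial part of the zero locus, and you would still face the statement that a minimal class $VII$ surface with $b_2>0$ and no curves (or at least none of this foliated configuration) cannot occur --- which is a piece of the GSS problem itself, not something the transversely affine structure of $\mathcal O(-C^2)$-type local models is known to rule out. In other words, your unconditional plan reduces the conjecture to a question at least as hard as the standing open problem; the only complete arguments in your proposal are the GSS-conditional one and the consistency check on Inoue surfaces, which is an accurate reflection of the current state of the question rather than progress beyond it.
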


 \providecommand{\bysame}{\leavevmode\hbox
to3em{\hrulefill}\thinspace}

Addresses:
\par\noindent{\it Ionu\c t Chiose}: \\
Institute of Mathematics of the Romanian Academy\\
{\tt Ionut.Chiose@imar.ro}\\

\par\noindent{\it Matei Toma}: \\
Institut Elie Cartan, UMR 7502, 
Nancy-Universit\'e - CNRS - INRIA,\\
B.P. 70239, 54506 Vandoeuvre-l\`es-Nancy Cedex, France \\
{\tt toma@iecn.u-nancy.fr}

\end{document}